%
%
%


\documentclass{siamltex}

\usepackage{url}
\usepackage{algorithm,algorithmic}
\usepackage{amsfonts}
\usepackage{amssymb} 
\usepackage{booktabs}
\usepackage{amsmath}
\usepackage{graphicx}
\usepackage{xcolor}

\newcommand{\R}{\mathbb{R}}

\newtheorem{remark}{Remark}
\newcommand{\nd}{\noindent}

\title{Efficient Computation of Dominant Eigenvalues Using Adaptive Block Lanczos with Chebyshev Filtering}
\author{M. El Guide\thanks{AIRESS, FGSES-UM6P, Rabat, Morocco}
	\and K. Jbilou\thanks{Université du Littoral Cote d'Opale, LMPA, 50 rue F. Buisson, 62228 Calais-Cedex, France.} \and K. Lachhab\thanks{FST, Université Cadi Ayyad, Marrakech, Maroc.}}

\textwidth 15cm 
\textheight 19cm

\begin{document}
	\maketitle

%
%

%

\begin{abstract}
We present an efficient method for computing dominant eigenvalues of large, nonsymmetric, diagonalizable matrices based on an adaptive block Lanczos algorithm combined with Chebyshev polynomial filtering. The proposed approach improves numerical stability through two key components: (i) the Adaptive Block Lanczos (ABLE) method, which maintains biorthogonality using SVD-based stabilization, and (ii) Chebyshev filtering, which enhances spectral separation via iterative polynomial filtering.  Numerical experiments on dense and sparse test problems confirm the effectiveness of the ABLE–Chebyshev algorithm, showing significantly improved accuracy and convergence compared to standard block Lanczos, especially in challenging spectral regimes with clustered or tightly packed eigenvalues.
{\bf keywords}{block Lanczos method,	Krylov subspace methods, Chebyshev filtering, non-hermitian eigenvalue problems, Ritz values, polynomial filtering, biorthogonalization}
\end{abstract}
\section{Introduction}

In many areas of applied science, one frequently faces the challenge of computing eigenvalues of large, nonsymmetric square matrices. Eigenvalues with the largest real parts are often of particular interest, especially in stability analyses. As highlighted by Kerner \cite{kerner}, such computations are essential in various physical applications. This motivates the development of numerical techniques for solving the following eigenvalue problem:
\begin{equation}
	A X = X \Lambda, \label{equ1}
\end{equation}
where $A \in \mathbb{R}^{n \times n}$, $X \in \mathbb{R}^{n \times s}$, and $\Lambda = \mbox{diag}(\lambda_1,\ldots,\lambda_s)$, with $\lambda_i$ denoting eigenvalues of $A$ for $i=1,\ldots,s$. 

Many efficient numerical algorithms for large-scale eigenvalue computations are built upon the Lanczos procedure, which is a method for the successive reduction of a general matrix to tridiagonal form \cite{lanczos}. Significant advances in the 1970s and 1980s established its effectiveness for solving large, sparse Hermitian eigenvalue problems. More recently, attention has shifted toward adapting the Lanczos framework for non-Hermitian problems, particularly through block versions of the algorithm. For instance, Bai, Day, and Ye introduced the Adaptive Block Lanczos (ABLE) method for large-scale non-Hermitian eigenvalue problems \cite{bai}; see also Riquet’s thesis \cite{Riquet} for further developments. A.H. Bentbib, M. El Guide and K. Jbilou \cite{bentbib} proposed a version of the block Lanczos method for ill-posed problems. In \cite{barkouki,barkouki1} , the authors presented rational block Lanczos variants applied to linear dynamical systems in a model reduction context. In \cite{freund}, the author proposed several algorithmic variants, including the Hermitian and non-Hermitian band Lanczos methods, as well as a specialized Lanczos procedure for complex symmetric matrices. These techniques fall within the class of projection methods onto Krylov subspaces. 
Saad \cite{saad3} demonstrated that such Krylov-based methods can effectively approximate the dominant part of the spectrum. However, their convergence deteriorates when the eigenvalues of interest are not well-separated from the rest of the spectrum. In practice, achieving satisfactory convergence often demands a large Krylov subspace, which increases both computational and memory costs.
To address these limitations, various restart strategies have been developed in the literature, where the algorithm is periodically reinitialized using the best available approximations to the desired eigenvectors, thereby reducing the subspace dimension while preserving convergence \cite{saad_restart,arpack,stewart_krylovschur,sorensen_polyfilter,Kamal_bentbib}.

In this work, we propose a complementary strategy that combines adaptive block Krylov methods with spectral filtering. Specifically, we enhance the block Lanczos method with a Chebyshev polynomial filter that selectively amplifies components in the dominant eigenspace. This filtering is embedded within the adaptive ABLE framework to stabilize the biorthogonalization process and improve robustness. The resulting ABLE–Chebyshev algorithm achieves both efficient convergence and numerical stability, especially in cases where the spectral gap is small or the eigenvalues are tightly clustered.

The remainder of this paper is organized as follows. Section \ref{sec2} introduces the block Lanczos algorithm and outlines its key properties.
In Section \ref{sec3}, we examine how this method approximates eigenvalues through Krylov subspace projections. Section \ref{sec4} investigates the convergence behavior of the block Lanczos algorithm on diagonalizable matrices and establishes its relationship to an associated normal matrix. Section \ref{sec5} presents a Chebyshev variant of the block Lanczos method designed to enhance convergence. In Section \ref{sec6}, we introduce a more robust version of the algorithm: the Adaptive Block Lanczos (ABLE) method, and extend it with Chebyshev filtering for improved stability and spectral targeting.
Section \ref{sec7} reports numerical experiments that validate the performance of the proposed methods. Finally, Section \ref{sec8} concludes the paper with a summary of results.
\section{Block Lanczos Process}\label{sec2}
\subsection*{The standard method}	
In many situations it is desirable to work with a block of $s$ starting vectors rather than a single starting vector. This can be achieved using block generalizations of Krylov subspace methods, in which $A$ operates on a group of $s$ vectors at each step rather than on a single vector. 
\\ Several methods exist for constructing a biorthogonal system between the block Krylov subspaces ${\cal K}_k(A; V_1)$ and ${\cal K}_k(A^T; W_1)$. For instance, Jbilou and Sadok \cite{jbilou,jbilou1} described a global Lanczos process, and Freund \cite{freund} presented a band Lanczos method. In this paper, we focus on the block biorthogonal Lanczos process, which extends Saad’s classic Lanczos algorithm  \cite{saad} from $s=1$ (single-vector case) to $s>1$. Algorithm \ref{bla} gives the classical block Lanczos procedure for constructing biorthogonal bases for the Krylov subspaces.
\begin{algorithm}
	\caption{Block Lanczos method}\label{bla}
	\begin{enumerate}
		\item Choose initial $V_1, W_1 \in \R^{n \times s}$ such that $W_1^T V_1 = I_{s}$.
		\item Set $B_1 = C_1 = 0_{s \times s}$ and $W_0 = V_0 = 0_{n \times s}$.
		\item For $j = 1, 2, \ldots, k$ do:
		\begin{enumerate}
			\item $A_j = W_j^T A V_j$.
			\item $\widehat{V}_{j+1} = A V_j - V_j A_j - V_{j-1} B_j$.
			\item $\widehat{W}_{j+1} = A^T W_j - W_j A_j^T - W_{j-1} C_j^T$.
			\item Compute the QR decompositions: ${\widehat W}_{j+1}^T {\widehat V}_{j+1} = B_{j+1} C_{j+1}$.
			\item $W_{j+1} = \widehat{W}_{j+1} B_{j+1}$; \quad $V_{j+1} = \widehat{V}_{j+1} B_{j+1}^{-1}$.
		\end{enumerate}
	\end{enumerate}
\end{algorithm}
The block Lanczos method builds a biorthogonal system of basis vectors $\{V_j\}_{j=1}^k$ and $\{W_j\}_{j=1}^k$ for the two Krylov subspaces ${\cal K}_k(A; V_1)$ and ${\cal K}_k(A^T; W_1)$, respectively, satisfying:
$$
\left \{
\begin{array}{ccc}
	W_i^TV_i & =I_{s \times s} & i=1,\ldots,k, \\
	~~~ & ~~~~ & ~~~\\
	W_i^TV_j & =0_{s \times s} & j,i=1,\ldots,k;~i \neq j.
\end{array}
\right.
$$
\noindent
In this algorithm, a breakdown may occur if any $C_{j+1}$ is singular (i.e., if $\widehat{W}_{j+1}$ and $\widehat{V}_{j+1}$ fail to have full column rank). 
The main drawbacks of the nonsymmetric block method is that  loss of biorthogonality may occur, requiring reorthogonalization or restarting, and there is also a risk of breakdown if a block becomes rank-deficient. At the end of the paper, we will give a more stable version of the algorithm. 


\nd If the block Lanczos  algorithm runs for $k$ iterations without abreakdown, then the block vectors $V_1,\ldots,V_k$ and $W_1,\ldots,W_k$ form a biorthogonal system, i.e.,
$$
\left \{
\begin{array}{ccc}
	W_i^TV_i & =I_{s \times s} & i=1,\ldots,k ,\\
	~~~ & ~~~~ & ~~~\\
	W_i^TV_j & =0_{s \times s} & j,i=1,\ldots,k;~i \neq j.
\end{array}
\right.
$$
\nd Setting $\mathcal{V}_{k}=[V_{1},V_{2},...,V_{k} ]$ and $\mathcal{W}_{k}=[ W_{1},W_{2},...,W_{k} ]$, we have the following block Lanczos relations
\begin{equation}\label{eqLanc1}
	\begin{aligned}
		A\mathcal{V}_{k} &= \mathcal{V}_{k}T_{k}+V_{k+1}B_{k+1}E_{k}^{T},\\
		&= \mathcal{V}_{k+1}{\widetilde T} _{k},
	\end{aligned}
\end{equation}
and
\begin{equation}\label{eqLanc2}
	\begin{aligned}
		A^{T}\mathcal{W}_{k} &= \mathcal{W}_{k}T_{k}^{T}+W_{k+1}C_{k+1}^{T}E_{k}^{T},\\
		& = {\mathcal W}_{k+1} {\widetilde T}_k^T,
	\end{aligned}
\end{equation}
where $E_k$ is last $ks\times s$ block of the identity matrix $I_{ks}$ and ${\widetilde T}_k$ is the block tridiagonal matrix defined by
\[
{\widetilde T}_k=\left( 
\begin{array}{cccccc}
	A_{1} & B_{2} &  &  &  & \\ 
	C_{2} & A_{2} &  &  &  &  \\ 
	& \ddots  & \ddots   &  \ddots &  &  \\ 
	&  &  &  &  & B_{k} \\ 
	&  &  &  & C_{k} & A_{k}\\
	& & & & & C_{k+1}
\end{array}%
\right).
\]
The block tridiagonal matrix $T_k$ is of dimension $ks\times ks$ and obtained from  ${\widetilde T}_k$ by removing the last $s$ rows. 

\section{Approximating A’s Eigenvalues from the Lanczos Projection}\label{sec3}

Computing a subset of eigenvalues of a large matrix $A$ provides important information about the matrix's spectral properties. Block Krylov processes such as Arnoldi and Lanczos allow us to estimate several of the eigenvalues of $A$. In order to do so, we first establish a link between the eigenvalues of $A$ and the eigenvalues of the smaller matrices $\tilde{T}_j$ for $j=1,\ldots,k$.
For each step $j$, partition $T_{j+1} = \begin{pmatrix}T_j \ C_{j+1}^T\end{pmatrix}$ where $T_j\in\R^{js\times js}$ and $C_{j+1}$ is the last block row. Then $A {\cal V}_j = {\cal V}_{j+1} \begin{pmatrix}T_j \ C_{j+1}^T\end{pmatrix}$, which we can write as $A {\cal V}_j = {\cal V}_j \widetilde{T}_j + V_{j+1}C_{j+1}$, i.e.,
\begin{equation}
	A {\cal V}_j ={\cal V}_j \tilde{T}_j+[0_{n \times s}; \ldots ;0_{n \times s};V_{j+1}C_{j+1}];
	\mbox{~~~} \forall j=1, \ldots ,k, \label{equ3}
\end{equation}
where $[0_{n \times s}; \ldots ;0_{n \times s};V_{j+1}C_{j+1}] \in \R^{n \times (j+1)s}$. The last term collects the contributions in the direction of $V_{j+1}$.\\
~~~\\
Let $z_i \in \R^{js}$ be an eigenvector of $\tilde{T}_j$ associated with eigenvalue $\lambda_i$ (for $i=1,\ldots, js$, counting multiplicities). And define $Z_{j,s}\in\R^{js\times js}$ to be the matrix whose columns are these eigenvectors, $Z_{j,s} = [\, z_1 ;\; z_2 ;\; \cdots ;\; z_{js}\,] $. ~~~~\\
Multiplying (\ref{equ3}) by $Z_{j,s}$ yields:
$$
A {\cal V}_jZ_{j,s} ={\cal V}_j \tilde{T}_jZ_{j,s}
+[0_{n \times s}; \ldots ;0_{n \times s};V_{j+1}C_{j+1}]Z_{j,s}
;\mbox{~~~} \forall j=1, \ldots ,k.
$$
Moreover, since $\tilde{T}_j z_i = \lambda_i z_i$ for all $i=1,\ldots,js$ and $j=1,\ldots,k$, then  we obtain 
$$
\tilde{T}_jZ_{j,s}=Z_{j,s} \Lambda \mbox{;~~} \forall j=1, \ldots ,k \mbox{;~~where~~}
\Lambda =diag( \lambda _1, \ldots , \lambda _{js}).
$$
Substituting this expression into the previous equation yields:
$$
A {\cal V}_jZ_{j,s} ={\cal V}_j Z_{j,s} \Lambda
+[0_{n \times s}; \ldots ;0_{n \times s};V_{j+1}C_{j+1}]Z_{j,s}
\mbox{;~~} \forall j=1, \ldots ,k .
$$
Now define $Z_j = {\cal V}_j Z_{j,s} \in \R^{n \times js}$, for all $ j=1, \ldots ,k$, This matrix can be viewed as containing approximate eigenvectors of $A$ corresponding to $\lambda_1,\ldots,\lambda_{js}$. Let $\widetilde{Z}{j,s}$ denote the last $s$ rows of $Z_{j,s}$. We can partition $Z_{j,s}$ conformally as
$$
Z_{j,s}= \left (
\begin{array}{c}
	*\\
	\vdots\\
	*\\
	\widetilde{Z}_{j,s}
\end{array}
\right ).
$$
Then from the previous relation we obtain:
\begin{equation}\label{equa3_1}
	\|A Z_j - Z_j \Lambda \|_F = \|\, V_{j+1} c_{j+1}\, \tilde{Z}_{j,s} \,\|_F, \qquad \forall j = 1,\ldots,k.
\end{equation}
We use the notation $	\|.\|_F$ for the Frobenius norm, and we will continue to use it throughout the paper.\\
Equation \ref{equa3_1} shows that the Frobenius-norm error in the eigenpair approximations (for eigenvalues $\lambda_1,\ldots,\lambda_{js}$) is proportional to the norm of  $V_{j+1}C_{j+1}\widetilde{Z}_{j,s}$.\\

\section{Some properties of the block Lanczos process}\label{sec4}

Among all matrices, certain classes exhibit special structural properties such as symmetry, Hermitianity, or normality, which often lead to more favorable numerical behavior. In this section, we aim to show that the convergence behavior of eigenvalue approximations produced by the block Lanczos method, when applied to a general diagonalizable matrix $A$ can be matched by its convergence on a certain related normal matrix.\\
More precisely, we will demonstrate that, this implies one can analyze the (typically simpler) normal matrix case to understand the convergence bounds for the general case. This observation implies that analyzing convergence on normal matrices can serve as a useful proxy for understanding the behavior of the algorithm on more general cases.
\begin{remark}
	In the analysis below we focus on right block vectors, but analogous results hold for left block vectors.
\end{remark}

\begin{theorem}
	Let \( U \in \mathbb{R}^{n \times n} \) be an orthonormal matrix, and let \( A \in \mathbb{R}^{n \times n} \) be a real, nonsymmetric matrix. Define
	\begin{equation}
		\hat{A} = U^T A U.\label{equ15}
	\end{equation}
	If the block Lanczos algorithm is applied to both $A$ and $\hat{A}$ with suitably related starting blocks, then for every iteration $j$, producing the projected matrices \( \tilde{T}_{jA} \), \( \tilde{T}_{j\hat{A}} \), and the corresponding Krylov bases \( \mathcal{V}_{jL,A} \), \( \mathcal{W}_{jL,A} \), \( \mathcal{V}_{jL,\hat{A}} \), and \( \mathcal{W}_{jL,\hat{A}} \), for each iteration index \( j \geq 1 \). Assume that the starting blocks  satisfy \( V_{1L,\hat{A}} = U^T V_{1L,A} \) and \( W_{1L,\hat{A}} = U^T W_{1L,A} \).
	Then, the following relations hold for all \( j \):
	\[
	\tilde{T}_{jA} = \tilde{T}_{j\hat{A}}, \quad \mathcal{V}_{jL,A} = U^T \mathcal{V}_{jL,\hat{A}}, \quad {\rm and} \quad  \mathcal{W}_{jL,A} = U^T \mathcal{W}_{jL,\hat{A}}.
	\]
	As a consequence, both the Ritz values and the Frobenius norms of the residual and error  for the block Lanczos approximations applied to \( A \) and \( \hat{A} \) are identical.
\end{theorem}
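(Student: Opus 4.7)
The plan is a straightforward induction on the iteration index $j$, exploiting the fact that $U$ is orthogonal ($U^T U = U U^T = I$) so that the block Lanczos recurrence is equivariant under the similarity $A \mapsto \hat{A} = U^T A U$. Note first that $A^T = U \hat{A}^T U^T$, and that the hypothesis $V_{1,\hat{A}} = U^T V_{1,A}$, $W_{1,\hat{A}} = U^T W_{1,A}$ immediately preserves biorthogonality: $W_{1,\hat{A}}^T V_{1,\hat{A}} = W_{1,A}^T U U^T V_{1,A} = I_s$. This handles the base case, and I set the $j{=}0$ blocks on both sides to zero by convention.

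For the inductive step, assume $V_{i,\hat{A}} = U^T V_{i,A}$ and $W_{i,\hat{A}} = U^T W_{i,A}$ for $i \le j$, and that $A_i$, $B_i$, $C_i$ agree for $i \le j$. I would then run through the five substeps of Algorithm \ref{bla} on $\hat{A}$. First, $A_{j,\hat{A}} = W_{j,\hat{A}}^T \hat{A} V_{j,\hat{A}} = W_{j,A}^T (U U^T A U U^T) V_{j,A} = W_{j,A}^T A V_{j,A} = A_{j,A}$. Next, factoring $U^T$ out,
\begin{equation*}
\widehat{V}_{j+1,\hat{A}} = \hat{A} V_{j,\hat{A}} - V_{j,\hat{A}} A_j - V_{j-1,\hat{A}} B_j = U^T\bigl(A V_{j,A} - V_{j,A} A_j - V_{j-1,A} B_j\bigr) = U^T \widehat{V}_{j+1,A},
\end{equation*}
and analogously $\widehat{W}_{j+1,\hat{A}} = U^T \widehat{W}_{j+1,A}$. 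Because $U$ is orthogonal, $\widehat{W}_{j+1,\hat{A}}^T \widehat{V}_{j+1,\hat{A}} = \widehat{W}_{j+1,A}^T \widehat{V}_{j+1,A}$, so the QR step produces identical factors $B_{j+1}$ and $C_{j+1}$. Multiplying by $B_{j+1}^{-1}$ then yields $V_{j+1,\hat{A}} = U^T V_{j+1,A}$, and likewise for $W_{j+1}$, closing the induction. Collecting blocks gives $\mathcal{V}_{j,\hat{A}} = U^T \mathcal{V}_{j,A}$ and $\mathcal{W}_{j,\hat{A}} = U^T \mathcal{W}_{j,A}$, and since the scalar blocks $A_i, B_i, C_i$ coincide, the block tridiagonal matrices satisfy $\tilde{T}_{jA} = \tilde{T}_{j\hat{A}}$.

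The consequences for Ritz values and residuals then follow with no additional work. Equality of $\tilde{T}_{jA}$ and $\tilde{T}_{j\hat{A}}$ gives the same spectrum and the same eigenvectors $Z_{j,s}$, hence the same Ritz values. For the residual, the formula (\ref{equa3_1}) gives $\|A Z_j - Z_j \Lambda\|_F = \|V_{j+1,A} C_{j+1} \tilde{Z}_{j,s}\|_F$, and since $V_{j+1,A} = U V_{j+1,\hat{A}}$, orthogonal invariance of the Frobenius norm yields $\|V_{j+1,A} C_{j+1} \tilde{Z}_{j,s}\|_F = \|V_{j+1,\hat{A}} C_{j+1} \tilde{Z}_{j,s}\|_F$, which is precisely the residual Frobenius norm for the $\hat{A}$ run.

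I do not expect a genuine obstacle; the argument is a clean bookkeeping exercise. The only place that requires mild care is verifying that the QR decomposition step produces \emph{the same} $B_{j+1}, C_{j+1}$ on both sides — this relies critically on $U$ being orthogonal so that the product $\widehat{W}^T \widehat{V}$ is invariant, and on treating the QR factorization as a deterministic procedure applied to identical inputs. Once that is observed, the rest of the proof is purely mechanical propagation of the $U^T$ prefactor through the Lanczos recurrence.
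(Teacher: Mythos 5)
Your proof is correct and follows essentially the same route as the paper: both rest on the equivariance of the block Lanczos recurrence under the orthogonal similarity $A \mapsto U^T A U$ with starting blocks transformed by $U^T$, together with orthogonal invariance of the Frobenius norm for the residual and error claims. Your explicit induction through the algorithm's substeps in fact supplies the justification the paper compresses into ``identifying corresponding terms,'' namely that the inner products $\widehat{W}_{j+1}^T\widehat{V}_{j+1}$, and hence the normalization factors $B_{j+1}$, $C_{j+1}$, coincide for the two runs.
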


\begin{proof}
	From the block Lanczos relation (\ref{equ3}) we have, for each $j=1,\ldots,k$:
	$$
	\left \{
	\begin{array}{ccc}
		A {\cal V}_{jL,A} & = & {\cal V}_{jL,A} \tilde{T}_{jA}+[0_{n \times s}; \ldots ;
		0_{n \times s};V_{(j+1)L,A}C_{(j+1)A}],\\
		A^T {\cal W}_{jL,A} & = & {\cal W}_{jL,A} \tilde{T}^T_{jA}+[0_{n \times s}; \ldots ;
		0_{n \times s};W_{(j+1)L,A}B^T_{(j+1)L,A}].\\
	\end{array}
	\right.
	$$
	Now, using the decomposition $\hat{A} = U^T A U$, we obtain:
	$$
	\left \{
	\begin{array}{ccc}
		U \hat{A}U^T {\cal V}_{jL,A} & = & {\cal V}_{jL,A} \tilde{T}_{jA}+[0_{n \times s}; \ldots ;
		0_{n \times s};V_{(j+1)L,A}C_{(j+1)A}],\\
		U \hat{A}^TU^T  {\cal W}_{jL,A} & = & {\cal W}_{jL,A} \tilde{T}^T_{jA}+[0_{n \times s}; \ldots ;
		0_{n \times s};W_{(j+1)L,A}B^T_{(j+1)A}].\\
	\end{array}
	\right.
	$$
	Now multiply the first relation on the left by $U^T$ and using $U^TU=I_{n \times n}$, we obtain:
	$$
	\left \{
	\begin{array}{ccc}
		\hat{A}(U^T {\cal V}_{jL,A}) & = &U^T {\cal V}_{jL,A} \tilde{T}_{jA}+[0_{n \times s}; \ldots ;
		0_{n \times s};U^TV_{(j+1)L,A}C_{(j+1)A}],\\
		\hat{A}^T(U^T {\cal W}_{jL,A}) & = & U^T {\cal W}_{jL,A} \tilde{T}^T_{jA}+[0_{n \times s}; \ldots ;
		0_{n \times s};U^TW_{(j+1)L,A}B^T_{(j+1)A}].\\
	\end{array}
	\right.
	$$
	Applying relation $(\ref{equ3}) $ to the matrix $	\hat{A}$
	, and identifying corresponding terms, we obtain the following equalities:
	$$
	\tilde{T}_{jA}= \tilde{T}_{j \hat{A}} \mbox{,~~} {\cal V}_{jL,A}=U
	{\cal V}_{jL, \hat{A}} \mbox{~and~~} {\cal W}_{jL,A}= U{\cal W}_{jL, \hat{A}}.
	$$
	In the second part of the theorem, we make use of the fact that $A$ and
	$\hat{A}$ have the same eigenvalues.\\
	Let $\Lambda =diag( \lambda_1; \ldots ; \lambda_s)$, the $s$ eigenvalues of
	the largest real part
	of $A$, and $X=[x_1; \ldots ,x_s]$
	be their associated respective eigenvectors.
	Thus, using the expression in equation~\ref{equ15}, we observe that $\Lambda =diag( \lambda_1; \ldots ; \lambda_s)$ represents the 
	$s$ eigenvalues of $\hat{A}$ with the largest real parts. Moreover, the corresponding eigenvectors are given by $U^TX=[\hat{x}_1; \ldots ,\hat{x}_s]$, where each ${x}_i$ is associated with ${\lambda}_i$.
	\\
	Suppose that $$\tilde{T}_{jA}Y_{j \tilde{T}}=Y_{j \tilde{T}} \theta,$$ where
	$\theta =diag( \mu_1; \ldots ; \mu_s)$ the $s$ eigenvalues of the largest real part
	of $\tilde{T}_{jA}= \tilde{T}_{j \hat{A}}$, and
	$Y_{j \tilde{T}}=[y_{j \tilde{T}}; \ldots ;y_{j \tilde{T}}]$
	their associated eigenvectors.\\
	We define the Ritz eigenvector approximations of $A$ and $\hat{A}$ $Z_{A}= {\cal V}_{jL,A}Y_{j \tilde{T}}$ and $Z_{ \hat{A}}
	= {\cal V}_{jL, \hat{A}}Y_{j \tilde{T}}$, respectively. Given the relation  ${\cal V}_{jL,A}=U {\cal V}_{jL, \hat{A}}$ it then follows that $Z_{ \hat{A}}=U^TZ_{A}$.\\
	Therefore, we obtain:  
	$$
	\|E_{A} \|_F= \| Z_{A}-X \|_F.
	$$
	Given the orthonormality of $U^T$, we deduce that 
	\begin{eqnarray*}
		\|E_{A} \|_F & = & \|U^TZ_{A}-U^TX \|_F,\\
		& = & \| Z_{ \hat{A}}-U^TX\|_F, \\
		& = & \|E_{ \hat{A}} \|_F.
	\end{eqnarray*}
	Furthermore, we have the same residual (by using relation (\ref{equ15})):
	\begin{eqnarray*}
		\|R_{A} \|_F & = & \|AZ_{A}-Z_{A} \theta \|_F,\\
		& = & \|U \hat{A}U^TZ_{A}-Z_{A} \theta \|_F,\\
		& = & \|U (\hat{A}U^TZ_{A,L}-U^TZ_{A} \theta) \|_F.
	\end{eqnarray*}
	Applying the same properties of the Frobenius norm as before, we obtain:
	$$
	\|R_{A} \|_F=\| \hat{A}U^TZ_{A}-U^TZ_{A} \theta \|_F.
	$$
	Since, $Z_{ \hat{A}}=U^TZ_{A}$, we finally obtain
	$$
	\|R_{A} \|_F=\| \hat{A}Z_{ \hat{A}}-Z_{ \hat{A}} \theta \|_F=
	\|R_{ \hat{A}} \|_F \mbox{~~~} .
	$$
\end{proof}

\begin{remark}
	For the left block Ritz vectors, the corresponding transformations are applied using the transpose matrix $U^T$.
\end{remark}
Diagonalizable matrices admit multiple representations, particularly through structured decompositions involving orthonormal and diagonal components. In the context of this section, we utilize both the diagonalizability of $A$ and its decomposition involving orthonormal matrices. When combined with the result of the previous theorem, this construction allows us to establish that the convergence behavior of the block Lanczos process on $A$ is preserved under transformation -thus bringing us closer to the main objective of this section.\\
\begin{theorem}
	Let \( A \in \mathbb{R}^{n \times n} \) be a diagonalizable matrix with Jordan decomposition \( A = Z \chi Z^{-1} \). Let \( Z = U \Sigma P^T \) be the singular value decomposition of \( Z \), where \( U, P \in \mathbb{R}^{n \times n} \) are orthonormal, and \( \Sigma \) is a positive diagonal matrix. Define the normal matrix \( \hat{A}_N = P^T \chi P \), and set
	\[
	\hat{A} = \Sigma \hat{A}_N \Sigma^{-1}.
	\]
	Then, \( A \) is orthogonally similar to \( \hat{A} \), and the block Lanczos method applied to \( A \) and \( \hat{A} \) yields identical Ritz values and residual norms.
\end{theorem}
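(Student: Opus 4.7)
The plan is to recognize that this theorem is essentially a corollary of the preceding one, once the correct orthogonal change of variables is identified. The nontrivial content reduces to a short algebraic verification that $\hat{A}$ is obtained from $A$ by conjugation with the orthogonal factor $U$ of the SVD of $Z$; the statement about Ritz values and residual norms then comes for free from the previous theorem.

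First I would compute $U^T A U$ explicitly. Substituting $A = Z\chi Z^{-1}$ with $Z = U\Sigma P^T$, so $Z^{-1} = P\Sigma^{-1} U^T$, the outer orthogonal factors collapse against the SVD factors, giving
\[
U^T A U = U^T (U\Sigma P^T)\chi(P\Sigma^{-1} U^T) U = \Sigma P^T \chi P \Sigma^{-1} = \Sigma \hat{A}_N \Sigma^{-1} = \hat{A}.
\]
Since $U$ is orthonormal, this is exactly the orthogonal similarity $\hat{A} = U^T A U$ required in the hypothesis of the previous theorem. Along the way I would also note that $\hat{A}_N = P^T \chi P$ is normal, because $\chi$ is diagonal (hence commutes with its conjugate transpose) and conjugation by an orthogonal matrix preserves normality; this justifies the terminology used in the statement.

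With the orthogonal similarity in hand, I would invoke the preceding theorem directly, using the same $U$. Choosing the starting blocks so that $V_{1L,\hat{A}} = U^T V_{1L,A}$ and $W_{1L,\hat{A}} = U^T W_{1L,A}$, that theorem guarantees $\tilde{T}_{jA} = \tilde{T}_{j\hat{A}}$ at every iteration, together with the basis relations $\mathcal{V}_{jL,A} = U\,\mathcal{V}_{jL,\hat{A}}$ and $\mathcal{W}_{jL,A} = U\,\mathcal{W}_{jL,\hat{A}}$. Consequently the Ritz values (eigenvalues of the common projected matrix) coincide, and the Frobenius norms of the eigenvector errors and residuals agree because $U$ is an isometry for $\|\cdot\|_F$.

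There is no real obstacle in this proof: the entire argument collapses to the algebraic identity $U^T Z\chi Z^{-1} U = \Sigma \hat{A}_N \Sigma^{-1}$. The only point that merits care is making the implicit assumption on the starting blocks explicit, so that the hypotheses of the previous theorem are literally satisfied and its conclusion can be transported without modification.
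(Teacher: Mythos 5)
Your proposal is correct and follows essentially the same route as the paper: compute $U^T A U = \Sigma P^T \chi P \Sigma^{-1} = \hat{A}$ from the SVD of $Z$, observe that $\hat{A}_N = P^T\chi P$ is normal because $\chi$ is diagonal, and then transfer the Ritz values and residual norms via the preceding orthogonal-similarity theorem. Your explicit statement of the required relation between the starting blocks is a small but worthwhile addition that the paper leaves implicit.
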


\begin{proof}
	If we apply the Jordan decomposition to the matrix $A$, we obtain:
	$$
	A=Z \chi Z^{-1}.
	$$
	As a first step, we express $Z$ via its singular value decomposition:
	$$
	Z=U \Sigma P^T \mbox{~~with~~}U^TU=I_{n \times n} \mbox{~~and~~}P^TP=I_{n \times n}.
	$$
	From the decompositions, we have:
	\[
	A = U \Sigma P^T \chi P \Sigma^{-1} U^T = U \hat{A} U^T,
	\]
	Since $\hat{A}_N=P^T \chi P$, and $\chi$
	is diagonal, it follows that $\hat{A}_N$ is a normal matrix. Moreover, if we define $\hat{A}= \Sigma \hat{A}_N \Sigma^{-1}$,
	, then $\hat{A}$ is similar to $\hat{A}_N$ via a similarity transformation involving a positive diagonal matrix $\Sigma$.\\
	This demonstrates that \( A \) is orthogonally similar to \( \hat{A} \). By using the result of previous theorem, we conclude that applying the block Lanczos algorithm to both  \( A \) and \( \hat{A} \) yields the same convergence behavior, eigenvalue approximations, and residuals.\\
\end{proof}
Under the assumptions of two preceding theorems, we can therefore conclude that the application of block Lanczos algorithm to the matrices $\hat{A}_N $ and $\hat{A}$
implies identical convergence behavior for the eigenvalue approximations.
\begin{theorem}
	Let \( \hat{A} = \Sigma \hat{A}_N \Sigma^{-1} \), where \( \hat{A}_N \) is normal. Apply the block Lanczos algorithm to \( \hat{A} \) with starting vectors \( (V_{1L}, W_{1L}) \), and to \( \hat{A}_N \) with scaled starting vectors \( (\Sigma^{-1} V_{1L}, \Sigma W_{1L}) \). Then, for all \( j \), the resulting projected matrices and vectors satisfy:
	\[
	\tilde{T}_{j\hat{A}} = \tilde{T}_{j\hat{A}_N}, \quad \mathcal{V}_{jL,\hat{A}} = \Sigma \mathcal{V}_{jL,\hat{A}_N}, \quad \mathcal{W}_{jL,\hat{A}} = \Sigma^{-1} \mathcal{W}_{jL,\hat{A}_N}.
	\]
	Moreover, if \( Z_{\hat{A}} = \mathcal{V}_{jL,\hat{A}} Y_j \) and \( Z_{\hat{A}_N} = \mathcal{V}_{jL,\hat{A}_N} Y_j \), then
	\[
	Z_{\hat{A}_N} = \Sigma^{-1} Z_{\hat{A}}, \quad \mbox{~~and~~} \quad \|R_{\hat{A}}\|_F = \|\hat{A} Z_{\hat{A}} - Z_{\hat{A}} \Lambda\|_F = \|\Sigma R_{\hat{A}_N}\|_F.
	\]
\end{theorem}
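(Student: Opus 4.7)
The plan is to mimic the argument of the preceding theorem but replace the orthogonal change of basis $U$ with the diagonal scaling $\Sigma$, keeping track of the fact that $\Sigma^{T}=\Sigma$ (since $\Sigma$ is diagonal) but $\Sigma^{-1}\neq \Sigma^{T}$ in general. Biorthogonality of the rescaled starting blocks is immediate:
\[
(\Sigma W_{1L})^{T}(\Sigma^{-1}V_{1L})=W_{1L}^{T}\Sigma \Sigma^{-1}V_{1L}=W_{1L}^{T}V_{1L}=I_{s},
\]
so the block Lanczos procedure applied to $\hat{A}_{N}$ with starting pair $(\Sigma^{-1}V_{1L},\Sigma W_{1L})$ is well defined.

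I would then argue by induction on $j$. Starting from the Lanczos relation \eqref{equ3} applied to $\hat{A}$, substitute $\hat{A}=\Sigma \hat{A}_{N}\Sigma^{-1}$ and left-multiply by $\Sigma^{-1}$ to obtain
\[
\hat{A}_{N}\bigl(\Sigma^{-1}\mathcal{V}_{jL,\hat{A}}\bigr)=\bigl(\Sigma^{-1}\mathcal{V}_{jL,\hat{A}}\bigr)\tilde{T}_{j\hat{A}}+\bigl[0;\ldots;0;\Sigma^{-1}V_{(j+1)L,\hat{A}}C_{(j+1)\hat{A}}\bigr].
\]
Doing the analogous manipulation with $\hat{A}^{T}=\Sigma^{-1}\hat{A}_{N}^{T}\Sigma$ and left-multiplying by $\Sigma$ gives
\[
\hat{A}_{N}^{T}\bigl(\Sigma \mathcal{W}_{jL,\hat{A}}\bigr)=\bigl(\Sigma \mathcal{W}_{jL,\hat{A}}\bigr)\tilde{T}_{j\hat{A}}^{T}+\bigl[0;\ldots;0;\Sigma W_{(j+1)L,\hat{A}}B_{(j+1)\hat{A}}^{T}\bigr].
\]
These are exactly the block Lanczos recurrences for $\hat{A}_{N}$ evaluated on the candidate blocks $\Sigma^{-1}\mathcal{V}_{jL,\hat{A}}$ and $\Sigma \mathcal{W}_{jL,\hat{A}}$, with the same coefficient matrices $A_{j},B_{j},C_{j}$.

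The main obstacle, and the step to treat carefully, is showing that the algorithm's normalization step (the QR/LU-type factorization of $\widehat{W}_{j+1}^{T}\widehat{V}_{j+1}$) yields the \emph{same} factors $B_{j+1},C_{j+1}$ in both runs, so that the induction closes. Here the identity $\Sigma^{T}=\Sigma$ is crucial: assuming inductively $\widehat{V}_{j+1,\hat{A}_{N}}=\Sigma^{-1}\widehat{V}_{j+1,\hat{A}}$ and $\widehat{W}_{j+1,\hat{A}_{N}}=\Sigma \widehat{W}_{j+1,\hat{A}}$, the products match,
\[
\widehat{W}_{j+1,\hat{A}_{N}}^{T}\widehat{V}_{j+1,\hat{A}_{N}}=\widehat{W}_{j+1,\hat{A}}^{T}\Sigma \Sigma^{-1}\widehat{V}_{j+1,\hat{A}}=\widehat{W}_{j+1,\hat{A}}^{T}\widehat{V}_{j+1,\hat{A}},
\]
so the same factorization produces identical $B_{j+1}$ and $C_{j+1}$, and hence $V_{j+1,\hat{A}_{N}}=\Sigma^{-1}V_{j+1,\hat{A}}$, $W_{j+1,\hat{A}_{N}}=\Sigma W_{j+1,\hat{A}}$, $\tilde{T}_{(j+1)\hat{A}_{N}}=\tilde{T}_{(j+1)\hat{A}}$, closing the induction.

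Once the three block identities are established, the Ritz-vector and residual statements are routine. Since $\tilde{T}_{j\hat{A}}=\tilde{T}_{j\hat{A}_{N}}$, eigenpairs $(\Lambda,Y_{j})$ of the projected matrix coincide, so $Z_{\hat{A}_{N}}=\mathcal{V}_{jL,\hat{A}_{N}}Y_{j}=\Sigma^{-1}\mathcal{V}_{jL,\hat{A}}Y_{j}=\Sigma^{-1}Z_{\hat{A}}$. Finally,
\[
R_{\hat{A}}=\hat{A}Z_{\hat{A}}-Z_{\hat{A}}\Lambda=\Sigma \hat{A}_{N}\Sigma^{-1}Z_{\hat{A}}-\Sigma Z_{\hat{A}_{N}}\Lambda=\Sigma\bigl(\hat{A}_{N}Z_{\hat{A}_{N}}-Z_{\hat{A}_{N}}\Lambda\bigr)=\Sigma R_{\hat{A}_{N}},
\]
yielding $\|R_{\hat{A}}\|_{F}=\|\Sigma R_{\hat{A}_{N}}\|_{F}$ as claimed. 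Unlike the previous theorem, the Frobenius norms are not equal, reflecting that the similarity between $\hat{A}$ and $\hat{A}_{N}$ is non-orthogonal and that the conditioning of $\Sigma$ governs the discrepancy.
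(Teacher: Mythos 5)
Your proof is correct and follows essentially the same route as the paper, which handles the first part by repeating the recurrence-identification argument of the preceding theorems with $\Sigma$ in place of $U$ and then derives $\|R_{\hat{A}}\|_F=\|\Sigma R_{\hat{A}_N}\|_F$ by the same insertion of $\Sigma\Sigma^{-1}$. In fact you supply a detail the paper glosses over, namely that $\widehat{W}_{j+1}^{T}\widehat{V}_{j+1}$ is invariant under the scaling (because $\Sigma^{T}=\Sigma$), so the normalization factors $B_{j+1},C_{j+1}$ coincide in the two runs and the induction closes; your closing remark that the residual norms are no longer equal, only related through $\Sigma$, is also the right observation.
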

\begin{proof}
	\vskip 2mm
	The first part of the proof follows directly from the reasoning presented in Theorem $2$.\\
	for the computation of $\| R_{ \hat{A}} \|_F$, we have:
	\begin{eqnarray*}
		\| R_{ \hat{A}} \|_F & = & \| \hat{A} Z_{ \hat{A}}-Z_{ \hat{A}} \theta \|_F, \\
		& = & \| \hat{A} \Sigma \Sigma^{-1}Z_{ \hat{A}}-\Sigma \Sigma^{-1}Z_{ \hat{A}} \theta \|_F.
	\end{eqnarray*}
	Yet, $Z_{ \hat{A}_N,L}= \Sigma^{-1} Z_{ \hat{A},L}$. Then, we obtain
	\begin{eqnarray*}
		\| R_{ \hat{A}} \|_F & = & \| \hat{A} \Sigma Z_{ \hat{A}_N}-\Sigma
		Z_{ \hat{A}_N} \theta \|_F,\\
		& = & \| \Sigma (\Sigma^{-1} \hat{A} \Sigma Z_{ \hat{A}_N}-Z_{ \hat{A}_N}
		\theta) \|_F.
	\end{eqnarray*}
	We know that $\hat{A}_N= \Sigma^{-1} \hat{A} \Sigma$, then
	$$
	\| R_{ \hat{A}} \|_F= \| \Sigma ( \hat{A}_N Z_{ \hat{A}_N}-Z_{ \hat{A}_N}
	\theta) \|_F,
	$$
	and, since we have $ R_{ \hat{A}_N}= \hat{A}_N Z_{ \hat{A}_N}-Z_{ \hat{A}_N}
	\theta$, we can concluse that:
	$$
	\| R_{ \hat{A}} \|_F=\| \Sigma R_{ \hat{A}_N} \|_F. \mbox{~~} 
	$$
This has the same form as $(\ref{equ3})$ (with $\tilde{T}_{j,A}$ in place of $\tilde{T}_{j,\hat{A}}$). By uniqueness of the recurrence, we must have $\tilde{T}_{j,\hat{A}} = \tilde{T}_{j,A}$ for each $j$. A similar argument can be applied to the second relation to show $\tilde{T}_{j,\hat{A}} = \tilde{T}_{j,A}$, consistent with the first result. It follows that the block Lanczos processes on $A$ and $\hat{A}$ produce the same projected matrices $\tilde{T}_j$ at each step. Moreover, $U\, {\cal V}_{jL,\hat{A}} = {\cal V}_{jL,A}$ and $U\, {\cal W}_{jL,\hat{A}} = {\cal W}_{jL,A}$, implying that the Ritz vectors for $\hat{A}$ are just $U^T$ applied to the Ritz vectors for $A$. Therefore, the residual norms and error norms for the right block Ritz vectors are identical for $A$ and $\hat{A}$ at every step. 
\end{proof}
Combining Theorems~1 and~2, we conclude that the convergence behavior of the block Lanczos method on any diagonalizable matrix $A$ is the same as its behavior on a certain normal matrix $\hat{A}_N$ with the same eigenvalues as $A$. In particular, the eigenvalue approximations and the residual and error norms for the Ritz vectors are identical for $A$ and $\hat{A}_N$.

\section{Block Lanczos--Chebyshev method for eigenvalue computation}\label{sec5}

The convergence of the eigenvalue approximations obtained by the block Lanczos process can sometimes be very slow, especially when the eigenvalues of interest are clustered or have small spectral gap. To overcome this difficulty, one approach is to use the method iteratively: the process is restarted periodically with the best available approximations to the desired eigenvectors. These approximate eigenvectors can be further refined via a Chebyshev iteration. The goal of the Chebyshev iteration is to amplify the components of the restart vectors in the desired eigendirections while attenuating those components corresponding to unwanted eigenvalues. Manteuffel \cite{mant1,mant2} used Chebyshev iterative filtering to accelerate the solution of linear systems. 
Throughout this section, assume $A\in\R^{n\times n}$ is real, nonsymmetric (but diagonalizable), with eigenvalues $\lambda_1,\ldots,\lambda_n$ sorted in decreasing order of $\Re(\lambda)$. We aim to solve the eigenvalue problem
$$
A u = \lambda u.
$$
Suppose we are interested in computing the leading $s$ eigenvalues $\lambda_1, \ldots, \lambda_s$ (with $s \le n$). The Chebyshev iteration approach is based on finding a suitably tight ellipse in the complex plane that contains all the unwanted eigenvalues $\{\lambda_{s+1}, \ldots, \lambda_n\}$.
We then seek a polynomial $P_k$ of degree $k$ hat acts as a spectral filter: the sequence defined by  $Z_k = P_k(A)\, Z_0$, where $Z_0 \in \R^{n \times s}$ an initial approximation to the target invariant subspace, is expected to converge to a matrix whose columns span a subspace that closely approximates the eigenspace associated with the eigenvectors $u_1,\ldots,u_s$. Let $T_k$ denote the Chebyshev polynomial of degree $k$ of the first kind \cite{saad2}. We choose 
$$
P_k(\lambda) = \frac{T_k\!\Big(\frac{\lambda - d}{c}\Big)}{T_k\!\Big(\frac{\lambda_s - d}{c}\Big)},
$$
where $d$ and $c$ are focal points, that $d$ is the center (real axis intersection) and $c$ the semi-major/minor distances. \\
By construction, $P_k(\lambda_s)=1$ and $|P_k(\lambda)| < 1$ for $\lambda$ in the unwanted set and  $|P_k(\lambda)| < 1$ for $\lambda$ outside the ellipse (specifically for $\lambda \in \{\lambda_{s+1},\ldots,\lambda_n\}$).
\\The associated convergence factor is defined by:
$$
R_i(d,c^2) = \frac{\Big|\, (d-\lambda_i) + \sqrt{(d-\lambda_i)^2 - c^2}\,\Big|}{\Big|\, (d-\lambda_s) + \sqrt{(d-\lambda_s)^2 - c^2}\,\Big| },
$$
for each unwanted eigenvalue $\lambda_i$. The choice of $d$ and $c$ that yields the smallest convergence factor (while enclosing all $\lambda_{s+1}, \ldots, \lambda_n$) defines the optimal Chebyshev ellipse. Ho \cite{ho2} developed a procedure to find this optimal ellipse containing the unwanted eigenvalues without having to test all possible ellipses.
\\
Recall that Chebyshev polynomials satisfy the recurrence:
$$
T_{i+1}(\lambda) = \lambda\, T_i(\lambda) - T_{i-1}(\lambda), \qquad i \ge 1,
$$
with $T_0(\lambda) = 1$ and $T_1(\lambda) = \lambda$.
\\
Assume that the eigenvalues $\mu_1, \ldots, \mu_{j s}$ of $\tilde{T}_j$ (for any $j \ge 2$) are labeled in decreasing order of their real parts. Then the sequence of matrices $Z_i$ ($i \ge 0$) can be computed using the following Chebyshev iteration procedure
\begin{algorithm}[h]
	\caption{Chebyshev iteration}\label{filtre}
	\begin{enumerate}
		\item $\displaystyle \sigma_1 = \frac{c}{\,\lambda_s - d\,}$; \quad $\displaystyle Z_1 = \frac{\sigma_1}{c}\, (A - d I)\, Z_0$.
		\item For $i = 1, 2, \ldots, k-1$ do:
		\begin{enumerate}
			\item $\displaystyle \sigma_{i+1} = \frac{1}{\,\frac{2}{\sigma_1} - \sigma_i\,}$.
			\item $\displaystyle Z_{i+1} = 2\, \frac{\sigma_{i+1}}{c}\, (A - d I)\, Z_i \;-\; \sigma_i\, \sigma_{i+1}\, Z_{i-1}$.
		\end{enumerate}
		\item End for.
	\end{enumerate}
\end{algorithm}

Algorithm \ref{filtre} filters an initial block $Z_0$ (e.g., a set of Ritz vectors) to produce an improved subspace $Z_k$. We incorporate this into the block Lanczos procedure as described next.

\nd 	In the block Lanczos-Chebyshev algorithm, we use Ho's algorithm \cite{ho2} to determine the optimal ellipse. The complete procedure for computing the $s$ eigenvalues of largest real part of a large nonsymmetric matrix is summarized as follows:

\begin{algorithm}
	\caption{Block Lanczos--Chebyshev algorithm}\label{blac}
	\begin{itemize}
		\item \textbf{Step 1}: Choose initial $V_1 = [v_1,\ldots,v_s]\in\R^{n\times s}$ and $W_1$ such that $W_1^T V_1 = I_s$. Set the maximum Lanczos steps $m$ and the Chebyshev polynomial degree $k$.
		\item \textbf{Step 2 (Block Lanczos stage)}: Starting with $V_1, W_1$, run Algorithm 1 for $m$ steps to obtain the block tridiagonal matrix $\widetilde{T}_m$. Compute the eigenvalues of $\widetilde{T}_m$ and select the $s$ eigenvalues of largest real part (and their Ritz vectors).
		\item \textbf{Step 3}: Determine the optimal Chebyshev ellipse that encloses the unwanted eigenvalues $ Spec(\widetilde{T}_m)\setminus{\mu_1,\ldots,\mu_s}$ (using, e.g., the procedure of Ho \cite{ho2}).
		\item  \textbf{Step 4 (Chebyshev stage)}: Let $Z_0$ be the $n\times s$ matrix whose columns are the $s$ Ritz vectors associated with $\mu_1,\ldots,\mu_s$. Perform $k$ steps of the Chebyshev iteration (Algorithm 3) to obtain a new starting block $V_1 := Z_k$. Re-orthogonalize $Z_k$ (via QR) to obtain the new $V_1$, and choose $W_1$ appropriately to satisfy $W_1^T V_1=I_s$.
		\item \textbf{Step 5}: Repeat from Step 2 until convergence or until a maximum number of restarts is reached.
	\end{itemize}
\end{algorithm}

This algorithm combines the block Lanczos method with Chebyshev polynomial filtering to efficiently compute a few of the dominant eigenpairs (eigenvalues and eigenvectors) of $A$. The Chebyshev iteration acts as a filter to improve the quality of the starting vectors before each Lanczos restart, thereby accelerating convergence to the desired eigenvalues.
\section{The Adaptive Block Lanczos-Chebyshev Method}\label{sec6}
\subsection{The adaptive block Lanczos algorithm}
One of the main challenges with both the standard Lanczos algorithm and its block variant is the potential for numerical instabilities, such as loss of biorthogonality or near-breakdown due to rank deficiency. To address these issues, we adopt a more stable variant introduced in \cite{bai}, known as the Adaptive Block Lanczos method (ABLE). This method enhances the robustness of the algorithm by incorporating stabilization mechanisms to mitigate the degradation of orthogonality during the iterations. The ABLE algorithm is detailed in Algorithm~\ref{blac2}. The method constructs two sequences of block vectors \( \{V_j\} \) and \( \{W_j\} \), which form biorthogonal bases satisfying:
\[
W_j^\top V_j = I_s, \qquad \mbox{~~for each iteration~~ } j.
\]
At each step, the action of \( A \) and \( A^\top \) on the current bases $A V_j$ is orthogonalized against previous $V$’s, and similarly for $A^T W_j$. This leads to a block tridiagonal projected matrix \( T_m \in \mathbb{R}^{ms \times ms} \), whose eigenvalues (Ritz values) approximate those of the original matrix \( A \).

One of the main challenges in such algorithms is the gradual loss of biorthogonality, which can significantly degrade numerical stability and the quality of the computed approximations. To mitigate this, ABLE introduces a stabilization mechanism through a singular value decomposition (SVD) of the inner product between the newly generated blocks:
\[
W_{j+1}^\top V_{j+1} = P_j D_j Q_j^\top.
\]
This is followed by a symmetric rescaling step:
\[
V_{j+1} \leftarrow V_{j+1} Q_j D_j^{-1/2}, \qquad
W_{j+1} \leftarrow W_{j+1} P_j D_j^{-1/2}.
\]
This ensures that the new blocks remain biorthogonal and averts numerical rank-deficiency that would be caused by nearly linearly dependent vectors. ABLE is described  in Algorithm \ref{blac2}.\\
\begin{algorithm}
	\caption{The   Adaptive Block Lanczos algorithm (ABLE)}\label{blac2}
	\begin{enumerate}
		\item \textbf{Inputs:} $A\in \mathbb{R}^{n \times n}, V, W \in \mathbb{R}^{n \times p}$ and $m \in \mathbb{N}$.
		\item \textbf{Initialize :} $V_{0}=W_{0}=0_{p}$ and $C_{1}=B_{1}=0_{p}$.
		\item For j=1,...,m
		\item \ \ \ $S_{j}=AV_{j}$ and $R_{j}=A^{T}W_{j}$
		\item \ \ \ $A_{j}=W_{j}^{T}S_{j}, B_{j}=W_{j-1}^{T}S_{j}$ and $C_{j}^{T}=V_{j-1}^{T}R_{j}$
		\item \ \ \ $S_{j+1}=S_{j}-V_{j}A_{j}-V_{j-1}B_{j}$ and $R_{j+1}=R_{j}-W_{j}A_{j}^{T}-W_{j-1}C_{j}^{T}$
		\item \ \ \ Compute the QR decomposition $S_{j+1}=V_{j+1}C_{j+1}^{T}$ and $R_{j+1}=W_{j+1}B_{j+1}^{T}$
		\item \ \ \ Compute the SVD of  $W_{j+1}^{T}V_{j+1}=P_{j}D_{j}Q_{j}^{T}$
		\item \ \ \ Update: $V_{j+1}=V_{j+1}Q_{j}D_{j}^{-1/2}$ and $W_{j+1}=W_{j+1}P_{j}D_{j}^{-1/2}$
		\item \ \ \ Update:  $B_{j+1}=D_{j}^{1/2}Q_{j}^{T}B_{j+1}$ and $C_{j+1}=D_{j}^{1/2}P_{j}^{T}C_{j+1}$
		\item end For.
	\end{enumerate}
\end{algorithm}

\subsection{ ABLE-Chebyshev}

When the eigenvalues of interest are not well-separated from the rest of the spectrum (i.e., small spectral gap), convergence may be slow even with stabilized iterations. To improve convergence in such cases, we augment ABLE with the Chebyshev filtering technique described in previous Section.
We use the same Chebyshev filter $P_k$ defined in previous Section :
$P_k(\lambda) = \frac{T_k((\lambda - d)/c)}{T_k((\lambda_s - d)/c)},$
where $d$ and $c$ define the Chebyshev ellipse (as before) and $\lambda_s$ is a reference eigenvalue near the desired spectral region. This polynomial is constructed to satisfy $P_k(\lambda_s)=1$ and $|P_k(\lambda)|<1$ for undesired $\lambda$.. This polynomial satisfies \( P_k(\lambda_s) = 1 \) and \( P_k(\lambda) < 1 \) for unwanted \( \lambda \).

In practice, after each block Lanczos cycle of length $m$, we apply the filter to the computed Ritz vectors. If $Z_j$ is the matrix of Ritz vectors (approximate eigenvectors) obtained, we replace
$$Z_j \leftarrow P_k(A)\, Z_j$$
using the polynomial defined above.
\[
Z_j \leftarrow \frac{T_k\left( \frac{A - d I}{c} \right)}{T_k\left( \frac{\lambda_s - d}{c} \right)} Z_j.
\]
This reshapes the spectral content of the Krylov subspace, enhancing alignment with the desired eigenspace. The filtered block is then orthonormalized and used as a starting point for the next iteration, effectively realizing a \textit{thick restart}.


\begin{algorithm}[H]
	\caption{The Adaptive Block Lanczos--Chebyshev Algorithm (ABLE-Chebyshev)}\label{alg:ablecheb}
	\begin{enumerate}
		\item Choose initial blocks \( V_1, W_1 \in \mathbb{R}^{n \times s} \) such that \( W_1^\top V_1 = I \). Set parameters \( m \), \( k \), \( d \), and \( c \).
		\item \textbf{Block Lanczos Stage:} Run ABLE for \( m \) steps to get the projected matrix \( T_m \) and dominant Ritz values \( \mu_1, \dots, \mu_s \), with associated Ritz vectors \( Z \).
		\item \textbf{Ellipse Estimation:} Determine the Chebyshev ellipse (center \( d \), radius \( c \)) that excludes \( \mu_1, \dots, \mu_s \) but includes unwanted spectral regions.
		\item \textbf{Chebyshev Filtering:} Apply the recurrence:
		\[
		Z_1 = \frac{\sigma_1}{c} (A - d I) Z_0, \quad Z_{i+1} = 2 \frac{\sigma_{i+1}}{c}(A - d I)Z_i - \sigma_i \sigma_{i+1} Z_{i-1}
		\]
		\item Re-orthonormalize \( Z_k \) and restart the process with \( V_1 = \mbox{qr}(Z_k), W_1 = V_1 \). Repeat until convergence.
	\end{enumerate}
\end{algorithm}
The ABLE-Chebyshev algorithm combines the numerical robustness of SVD-based block Lanczos with the spectral targeting of polynomial filtering. While the Chebyshev filter does not reduce the residual norm directly, it improves the quality of the search subspace and helps guide the Lanczos process toward the dominant spectral components (i.e., the components associated with $\lambda_1,\ldots,\lambda_s$). This makes ABLE-Chebyshev particularly effective for non-Hermitian eigenvalue problems with clustered or tightly packed spectra.

\section{Numerical examples}\label{sec7}
For the numerical validation, we present several examples designed to compare the performance of the proposed block Lanczos variants, specifically, the standard Adaptive Block Lanczos (ABLE) method and its Chebyshev-filtering counterpart (ABLE–Chebyshev). These experiments assess both the convergence rate and the accuracy of the computed dominant eigenvalues. The test problems include both dense and sparse nonsymmetric matrices, as well as a symmetric benchmark arising from the discretized 2D Laplacian.
\\
All experiments were implemented in MATLAB R2016a and executed on a machine equipped with an Intel Core i3 processor (2.4 GHz) and 4 GB of RAM. Each computation was performed using double precision arithmetic, yielding approximately 15–16 significant decimal digits of numerical accuracy.
\\
We examine the impact of Chebyshev filtering on convergence error and residual norms, and highlight cases where the standard ABLE method struggles due to spectral clustering or small gaps, while the ABLE–Chebyshev variant succeeds in efficiently isolating the desired eigenvalues.


\subsection*{Example 1}

To evaluate the impact of Chebyshev filtering on block Krylov methods, we consider a dense, non-symmetric, diagonalizable matrix \( A \in \mathbb{R}^{n \times n} \) with \( n = 2000 \), constructed as
\[
A = Q D Q^{-1},
\]
where \( D \) is a diagonal matrix with eigenvalues evenly spaced from 10 down to 1, and random invertible matrix  \( Q \) (full-rank). This ensures that \( A \) is diagonalizable with a known spectrum, but not symmetric. We compute the \( s \) dominant eigenvalues of \( A \) using two algorithms. The first is ABLE, the adaptive block Lanczos algorithm without Chebyshev filtering, which serves as the baseline. The second is ABLE--Chebyshev, the proposed adaptive block Lanczos algorithm combined with Chebyshev filtering, using a polynomial filter of degree \( k \) and ellipse parameters \( d \) and \( c \) as defined earlier.  Both the ABLE and ABLE--Chebyshev methods were executed under identical computational settings, summarized in Table~\ref{tab:params1}.:

\begin{table}[H]
	\centering
		\caption{Summary of computational parameters used in Example 1}
	\label{tab:params1}
	\begin{tabular}{l|l}
		\hline
		Parameter & Value \\
		\hline
		Matrix size $n$ & 2000 \\
		Number of eigenvalues $s$ & 5 \\
		Krylov subspace dimension $m$ & 30 \\
		Chebyshev polynomial degree $k$ & 20 \\
		Maximum number of restarts & 10 \\
		Convergence tolerance & $10^{-8}$ \\
		\hline
	\end{tabular}
\end{table}
Figure \ref{fig:able-convergence1} illustrates the evolution of the residual norm over successive iterations for both the ABLE and ABLE–Chebyshev algorithms. As seen in the plot, the residual norm for the standard ABLE method remains nearly constant across iterations, indicating stagnation and failure to make meaningful progress toward convergence. In contrast, the ABLE–Chebyshev variant exhibits a consistent and substantial decrease in the residual norm, with a drop of several orders of magnitude within the first few iterations. This demonstrates the effectiveness of Chebyshev filtering in enhancing the quality of the Krylov subspace and guiding the approximation toward the dominant eigenspace. The results confirm that the ABLE–Chebyshev method is significantly more robust in resolving the targeted eigenvalues, especially in challenging spectral configurations.

\begin{figure}[h!]
	\centering
	\includegraphics[width=0.6\textwidth]{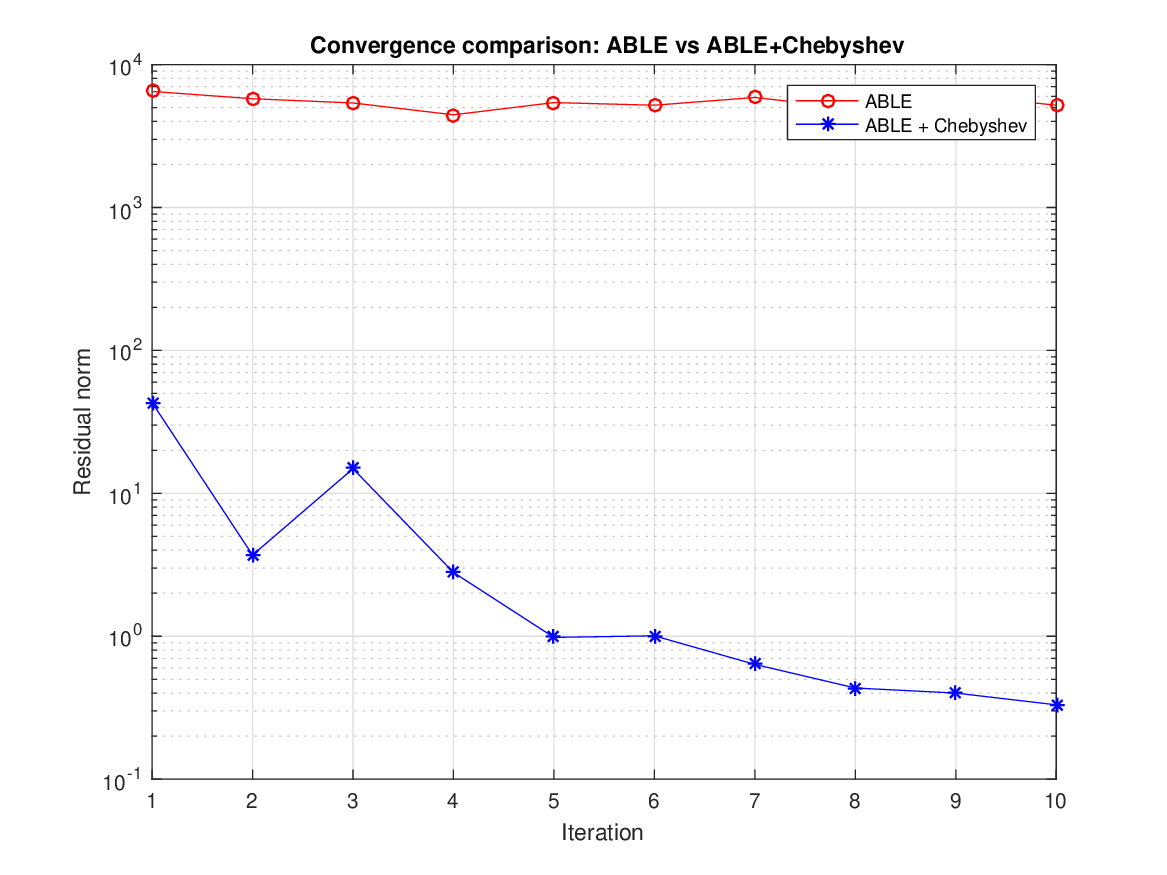}
	\caption{Residual norm per iteration for ABLE, ABLE-- Chebyshev}
	\label{fig:able-convergence1}
\end{figure}
Table~\ref{tab:able-errors1} reports the absolute error between the computed Ritz values and the exact eigenvalues extracted from the diagonal of \( D \). The results clearly highlight the superiority of the ABLE--Chebyshev method over the standard ABLE approach. While the baseline ABLE method yields large errors—reaching up to \( \mathcal{O}(10^2) \) for some indices—the Chebyshev-accelerated variant consistently achieves significantly higher accuracy across all five dominant eigenvalues. Notably, for the fifth eigenvalue, the error is reduced from \( 1.10 \times 10^{2} \) to \( 5.00 \times 10^{-4} \), demonstrating the effectiveness of spectral filtering in isolating the desired spectral components. This confirms that Chebyshev filtering not only enhances convergence behavior but also improves the reliability of the eigenvalue approximations.

\begin{table}[H]
	\centering
	\caption{Absolute error on the largest 5 eigenvalues}
	\label{tab:able-errors1}
	\begin{tabular}{c|c|c|c}
		\toprule
		\textbf{Index} & \textbf{ABLE}  & \textbf{ABLE-- Chebyshev} \\
		\midrule
		1 & $ 2.48 \times 10^{+2}$ & $ 1.29 \times 10^{-2}$  \\
		2 & $ 7.98 \times 10^{0}$ & $ 9.55 \times 10^{-3}$  \\
		3 & $ 1.45 \times 10^{+1}$ & $ 5.06 \times 10^{-3}$  \\
		4 & $ 8.35 \times 10^{+1}$ & $ 1.84 \times 10^{-3}$  \\
		5 & $ 1.10 \times 10^{+2}$ & $ 5.00 \times 10^{-4}$  \\
		\bottomrule
	\end{tabular}
\end{table}

\subsection*{Example 2}
To further evaluate the robustness of the proposed methods, we repeat the numerical experiment using a larger matrix and modified algorithmic parameters. The updated configuration is summarized in Table~\ref{tab:params2}.
\begin{table}[H]
	\caption{Computational parameters used in Example 2}
	\label{tab:params2}
	\centering
	\begin{tabular}{l|l}
		\hline
		Parameter & Value \\
		\hline
		Matrix size $n$ & 5000 \\
		Number of eigenvalues $s$ & 5 \\
		Krylov subspace dimension $m$ & 60 \\
		Chebyshev polynomial degree $k$ & 60 \\
		Maximum number of restarts & 30 \\
		Convergence tolerance & $10^{-8}$ \\
		\hline
	\end{tabular}
\end{table}
Figure~\ref{fig:able-convergence2} and Table~\ref{tab:able-errors2} confirm the findings observed earlier: the ABLE--Chebyshev algorithm consistently outperforms the baseline ABLE method in terms of convergence behavior and approximation accuracy. 
\\
In Figure~\ref{fig:able-convergence2}, the residual norm for the ABLE method stagnates around \(10^4\) over the course of 30 iterations, indicating a failure to progress. In contrast, the ABLE--Chebyshev method demonstrates a steady and substantial reduction in the residual norm, reaching values close to \(10^0\), which indicates that the filtered Krylov subspace better captures the dominant eigenspace of the matrix.
\\
Similarly, Table~\ref{tab:able-errors2} shows that the absolute errors in the computed eigenvalues using ABLE remain large—up to \(2.08 \times 10^3\) for the first eigenvalue—while the ABLE--Chebyshev variant reduces all errors to within at most \(1.31 \times 10^{-1}\), confirming the enhanced reliability of the Chebyshev-filtered strategy across different problem settings.
\begin{table}[H]
	\centering
	\caption{Absolute error on the dominant 5 eigenvalues}
	\label{tab:able-errors2}
	\begin{tabular}{c|c|c|c}
		\toprule
		\textbf{Index} & \textbf{ABLE}  & \textbf{ABLE-- Chebyshev} \\
		\midrule
		1 & $ 2.08 \times 10^{+3}$ & $ 6.37 \times 10^{-3}$  \\
		2 & $ 1.67 \times 10^{+2}$ & $ 2.00 \times 10^{-2}$  \\
		3 & $ 3.03 \times 10^{+1}$ & $ 3.37 \times 10^{-2}$  \\
		4 & $ 1.08 \times 10^{+2}$ & $ 4.17 \times 10^{-2}$  \\
		5 & $ 2.27 \times 10^{+2}$ & $ 1.31 \times 10^{-1}$  \\
		\bottomrule
	\end{tabular}
\end{table}

\begin{figure}[H]
	\centering
	\includegraphics[width=0.6\textwidth]{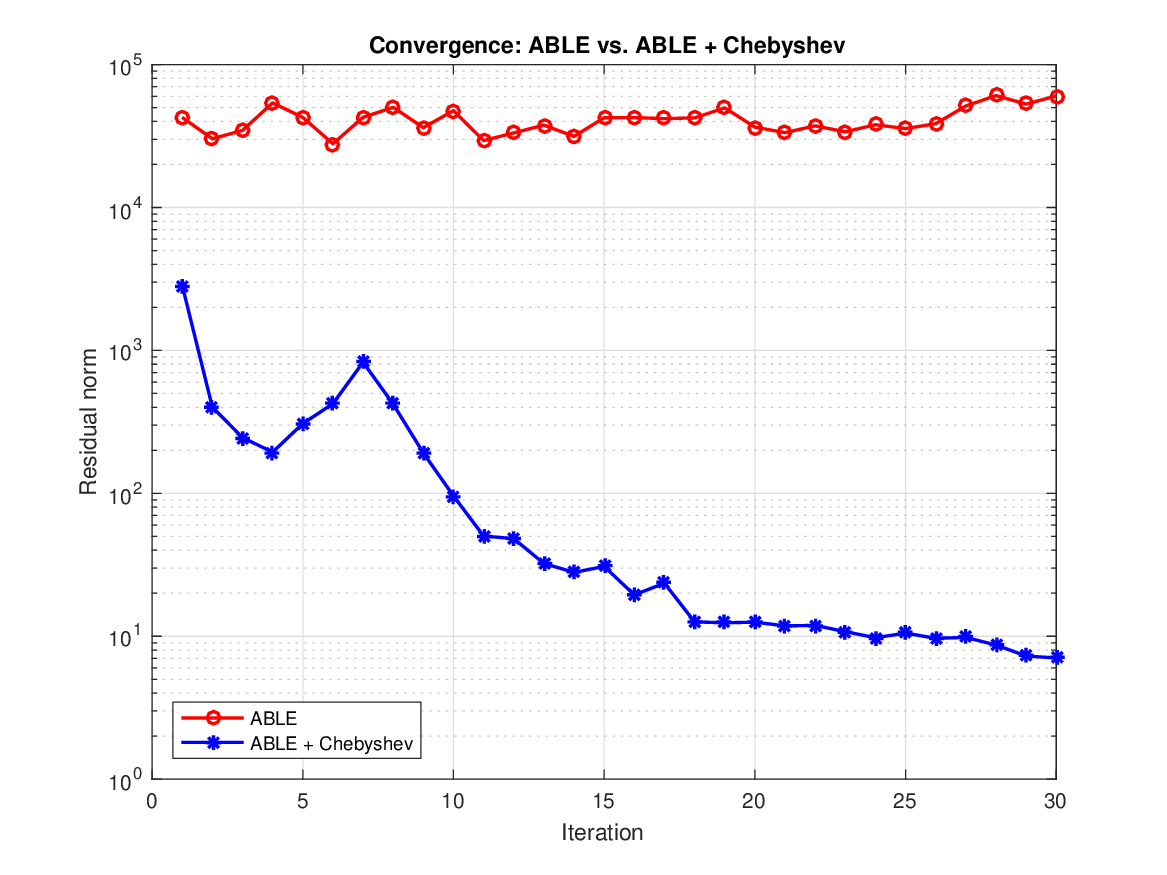}
	\caption{Residual norm per iteration for ABLE, ABLE-Chebyshev}
	\label{fig:able-convergence2}
\end{figure}

\subsection*{Example 3: Symmetric Eigenvalue Problem from the 2D Laplacian}

We consider the symmetric eigenvalue problem associated with the negative Laplacian operator defined on the unit square with homogeneous Dirichlet boundary conditions:
\begin{equation}
	\begin{cases}
		- \Delta u(x, y) = \lambda u(x, y), & (x, y) \in \Omega = (0,1)^2, \\
		u(x, y) = 0, & (x, y) \in \partial \Omega.
	\end{cases}
\end{equation}
This problem arises in various physical models, such as steady-state heat conduction, vibration of membranes, or diffusion processes. 
We discretize the domain $\Omega$ using a uniform grid of $N \times N$ interior points, with mesh spacing $h = \frac{1}{N+1}$. The Laplace operator is approximated using the standard second-order finite difference scheme:
\begin{equation}
	- \Delta u(x_i, y_j) \approx \frac{1}{h^2} \left( -u_{i-1,j} - u_{i+1,j} - u_{i,j-1} - u_{i,j+1} + 4u_{i,j} \right).
\end{equation}
This leads to a discrete matrix eigenvalue problem of the form:
\begin{equation}
	A \mathbf{u} = \lambda \mathbf{u},
\end{equation}
where $A \in \mathbb{R}^{N^2 \times N^2}$ is a sparse, real, symmetric, and positive definite matrix with a block tridiagonal structure.
After applying a 5-point finite difference discretization of the Laplace operator on a square grid of size \( N \times N \), we obtain a sparse symmetric matrix \( A \in \mathbb{R}^{n \times n} \), where \( n = N^2 \). The matrix \( A \) has the following block structure:
\[
A = \begin{bmatrix}
	T & -I &      &        &        \\
	-I & T & -I   &        &        \\
	& -I & T   & \ddots &        \\
	&    & \ddots & \ddots & -I   \\
	&    &        & -I & T
\end{bmatrix} \in \mathbb{R}^{N^2 \times N^2},
\]
where \( T \in \mathbb{R}^{N \times N} \) is the tridiagonal matrix
\[
T = \begin{bmatrix}
	4 & -1 &        &        &        \\
	-1 & 4 & -1     &        &        \\
	& \ddots & \ddots & \ddots &        \\
	&        & -1 & 4 & -1     \\
	&        &     & -1 & 4
\end{bmatrix}, \quad
I = \text{Identity matrix of size } N.
\]
Each block row in \( A \) corresponds to one row of the 2D grid, and each entry connects a point to its horizontal and vertical neighbors, reflecting the 5-point stencil:
\[
\begin{array}{ccc}
	& -1 & \\
	-1 & 4  & -1 \\
	& -1 &
\end{array}
\]
The matrix \( A \) is real, symmetric, and positive definite. It has at most 5 nonzero entries per row, and its sparsity pattern corresponds to the connectivity of interior nodes in a regular 2D grid. 
The matrix $A$ can be expressed using Kronecker products as:
\begin{equation}
	A = \frac{1}{h^2} \left( I_N \otimes T_N + T_N \otimes I_N \right),
\end{equation}
where $T_N = \text{tridiag}(-1, 4, -1) \in \mathbb{R}^{N \times N}$ is the 2D discrete Laplacian and $I_N$ is the $N \times N$ identity matrix.\\
In our experiment, we set $N = 50$, which results in a matrix of size $N^2 = 2500$. The matrix is assembled using Kronecker products and is guaranteed to be symmetric by construction.
\\
Table~\ref{tab:able-errors-lap1} presents the approximate eigenvalues computed by the ABLE--Chebyshev method at convergence, alongside the corresponding results from the standard ABLE algorithm. The exact eigenvalues are known in this case, as the problem is derived from the symmetric discretization of the 2D Laplacian. For the first two eigenvalues, both methods achieve full precision with errors on the order of \(10^{-15}\), consistent with the use of double-precision arithmetic. However, for the third and fourth eigenvalues, the baseline ABLE method fails to resolve the correct spectral values, returning approximations that are orders of magnitude away from the exact ones. In contrast, the ABLE--Chebyshev method successfully captures these eigenvalues with errors on the order of \(10^{-2}\) and \(10^{-3}\), respectively. These results confirm that Chebyshev filtering significantly enhances the robustness and reliability of the eigenvalue approximations, especially when the dominant part of the spectrum is not well-isolated.

\begin{table}[H]
	\centering
	\caption{Comparison of exact and computed eigenvalues for a symmetric Laplacian problem using ABLE and ABLE--Chebyshev methods}
	\label{tab:able-errors-lap1}
	\begin{tabular}{|c|c|c|c|c|c|}
		\hline
		\textbf{Index} & \textbf{Exact} & \textbf{ABLE} & \textbf{ABLE-Cheb} & \textbf{ Error (ABLE)} & \textbf{Error (ABLE-Cheb)} \\
		\hline
		1 & \(7.9924\) & \(7.9924\) & \(7.9924\) & \(8.88 \times 10^{-16}\) & \(1.78 \times 10^{-15}\) \\
		2 & \(7.9810\) & \(7.9810\) & \(7.9810\) & \(1.78 \times 10^{-15}\) & \(7.11 \times 10^{-15}\) \\
		3 & \(7.9810\) & \(1.8952 \times 10^{-2}\) & \(7.9697\) & \(7.9620\) & \(1.14 \times 10^{-2}\) \\
		4 & \(7.9697\) & \(7.5867 \times 10^{-3}\) & \(7.9622\) & \(7.9621\) & \(7.53 \times 10^{-3}\) \\
		\hline
	\end{tabular}
\end{table}
Table~\ref{tab:able-errors-lap2} reports the residual norms associated with the eigenvalue approximations computed by the ABLE and ABLE--Chebyshev methods. The results show that both methods achieve residuals on the order of \(10^{-9}\) to \(10^{-10}\), confirming that the approximated eigenpairs satisfy the eigenvalue relation to high numerical accuracy. Notably, the residual norms are comparable between the two methods, which indicates that the improved eigenvalue approximations achieved by ABLE--Chebyshev, as shown previously in Table~\ref{tab:able-errors-lap1}, are not the result of overfitting or loss of numerical consistency. This further validates the reliability of the Chebyshev-filtering approach in resolving dominant eigenmodes while maintaining numerical stability.

\begin{table}[h!]
	\centering
	\caption{Residual norms associated with ABLE and ABLE--Chebyshev methods.}
	\label{tab:able-errors-lap2}
	\begin{tabular}{|c|c|c|}
		\hline
		\textbf{Index} & \textbf{Residual (ABLE)} & \textbf{Residual (ABLE-Cheb)} \\
		\hline
		1 & \(1.09 \times 10^{-10}\) & \(2.20 \times 10^{-10}\) \\
		2 & \(1.98 \times 10^{-09}\) & \(9.24 \times 10^{-09}\) \\
		3 & \(1.08 \times 10^{-08}\) & \(1.03 \times 10^{-08}\) \\
		4 & \(1.32 \times 10^{-09}\) & \(1.12 \times 10^{-09}\) \\
		\hline
	\end{tabular}
\end{table}
\section{Conclusion}\label{sec8}
In this paper, we presented and studied enhanced variants of the block Lanczos method for computing dominant eigenvalues of large, nonsymmetric, diagonalizable matrices. Our approach integrates Chebyshev polynomial filtering with the ABLE method to improve the reliability and robustness of the computed approximations. The resulting ABLE--Chebyshev algorithm combines stabilized biorthogonalization with targeted spectral filtering, allowing it to handle difficult spectral configurations, such as closely clustered eigenvalues or small spectral gaps.
\\
We further showed that the convergence behavior on any diagonalizable matrix can be interpreted through its behavior on an associated normal matrix, offering useful insight into the structure of Krylov subspace methods in the nonsymmetric setting.
\\
Numerical experiments demonstrated that the ABLE--Chebyshev method consistently produces more accurate eigenvalue approximations than the baseline ABLE algorithm, while maintaining comparably low residual norms. These results were confirmed across a range of matrix types, including dense nonsymmetric examples and structured problems derived from partial differential equations.

%
%

\end{document}